\documentclass[onecolumn]{IEEEtran}

\usepackage{wrapfig}
                
\usepackage{graphicx}      

\usepackage{amsmath}
\usepackage{amssymb}
\usepackage{hyperref}

 \usepackage{hyperref}
\usepackage{amsmath,amssymb,mathtools}
\usepackage{blkarray, bigstrut}
\usepackage{booktabs}
\usepackage{enumerate}
\usepackage{circuitikz} 

\usetikzlibrary{arrows.meta}

\usepackage{verbatim}

\usepackage{subcaption}
\usepackage{float}
 \usepackage{dsfont}
 
\usepackage{tikz}
\usetikzlibrary{positioning}

 \newcommand{\real}{\operatorname{Re}}
 \newcommand{\imag}{\operatorname{Im}}
 
 \newcommand{\sign}{\operatorname{sgn}}

\newtheorem{theorem}{Theorem}
 
 \newtheorem{assumption}{Assumption}

 \newtheorem{problem}{Problem}
  \newtheorem{proposition}{Proposition}
 \newtheorem{remark} {Remark}

\newcommand{\beq}       {\begin{equation}}
\newcommand{\eeq}       {\end{equation}}
\newcommand{\bery}      {\begin{array}}
\newcommand{\eery}      {\end{array}}
\newcommand{\berys}     {\begin{array*}}
\newcommand{\eerys}     {\end{array*}}
\newcommand{\beqry}     {\begin{eqnarray}}
\newcommand{\eeqry}     {\end{eqnarray}}
\newcommand{\beqrys}    {\begin{eqnarray*}}
\newcommand{\eeqrys}    {\end{eqnarray*}}

\def \R {{\mathbb R}}

 \newcommand{\ii}{\mathrm{i}}

\def\sign{\mathop{\rm sign}\nolimits}

\def\Re{\mathop{\rm Re}\nolimits}

 \def\C{{\mathbb C}}

\def\T{{\mathcal T}}

\newcommand{\be}{\begin{equation}}
\newcommand{\ee}{\end{equation}}

\begin{document}

%
\title{On the Asymptotic
Switching Density  in Time-Optimal Control of Linear Systems}

\author{Omri Dalin, Alexander Ovseevich, and Michael Margaliot%
\thanks{This research is partially supported by  research grant 221/24 from the Israeli Science Foundation (ISF).}
\thanks{The authors are with the School of Electrical and Computer Engineering, Tel Aviv University, Israel (e-mail: michaelm@tauex.tau.ac.il).}}

\maketitle

\begin{abstract}
We study the time-optimal control of a controllable linear system on a   time horizon 
$[0,T]$, focusing on the asymptotic switching density        for large~$T$.  When the system matrix has only real eigenvalues, it is well-known  that the number of switches is upper 
bounded uniformly in 
$T$; when it has complex eigenvalues, no such uniform bound exists, and the switching count instead typically grows with 
$T$.  We characterize this growth for a system matrix with an arbitrary spectrum, allowing simultaneously for real eigenvalues, complex eigenvalues, and  a non-trivial Jordan structure.   If the dominant mode is complex, the number of switches grows at least linearly in 
$T$, with an explicit lower bound  expressed via the mean motion problem and  the Bohl–Weyl–Wintner formula. We illustrate the theory on a linearized aircraft pitch/altitude model, showing close agreement between the predicted asymptotic switching rate and numerically computed time-optimal controls.
\end{abstract}

\begin{IEEEkeywords}
Time-optimal control,
nice reachability, Bohl–Weyl–Wintner~(BWW) formula, mean motion problem,   switching function.
\end{IEEEkeywords}


\section{Introduction}
Analyzing the 
switching function in an 
optimal control problem  is important  for both  theoretical
and computational reasons. If the optimal control is bang-bang and
the number of switches is upper bounded by a   constant~$N$ for any time horizon~$[0,T]$ then the infinite-dimensional problem of finding the optimal control reduces to a finite-dimensional parameter optimization over   up to~$N-1$ 
discrete switching times. More generally, a 
known bound on the number of switching points is   useful  
in  numerical schemes for solving optimal control problems
such as direct shooting or pseudospectral methods (see e.g.~\cite{grognard2003computation} and the references therein). 

Second, bounding the number of switches is essential for establishing ``nice'' reachability results, that is, 
identifying a 
subset of controls~$\mathcal{W}$ with ``favorable'' properties such that steering the system between two states can always be achieved using a control from~$\mathcal{W}$ (see, e.g., \cite{suss_nice_reach,levinson_select,SHARON_3_nilp}). Such reachability results play an important role in proving stability under arbitrary switching 
in switched systems  \cite{MARGALIOT_variational,margaliot_liberzon_lie_algebraic}.

 Third, in implementing optimal controllers, the switching frequency determines how rapidly the actuator must toggle between its saturation limits, so an upper bound on the number of switches provides an upper bound
 on the actuator's switching rate.

Consider the single-input linear control system
\begin{align}\label{eq:lin_cont_sys}
\dot x&=Ax + b  u,
\end{align}
with $x:[0,\infty)\to \R^n$, $A\in\R^{n\times n}$, $b \in\R^{n}$, and measurable controls~$u:[0,\infty)\to[-1,1]$. We assume throughout that~$(A,b)$ is controllable. Fix an 
initial state $x^1\in\R^n$ and a target state $x^2\in\R^n$ such that~$x^2$ is reachable from~$x^1$ using an admissible control, and consider the problem of finding an admissible  control $u$ 
that steers the system from $x(0)=x^1$ to~$x(T)=x^2$ in \emph{minimal time}~$T$.

It is well-known (see e.g. \cite[Ch. 1]{bonnard_chyba}) that an optimal control exists and satisfies $u^*(t)=\sign (m(t))$,
where the switching function $m:[0,T]\to\R$ is defined by 
 $
 m(t):=p^\top (t) b . 
$ 
Here $p:[0,T]\to\R^n\setminus\{0\}$  is the adjoint state vector
of the Pontryagin maximum principle, and it  satisfies
\begin{equation}\label{adjoint}\dot p(t)=- A^\top p(t), 
\end{equation}
so
\be\label{eq:thimi}
m(t)=p^\top(0) e^{-At} b. 
\ee

Under the controllability assumption, $m$ has a finite number of zeros on any open time interval, ensuring any time-optimal control is    bang-bang. Since $m$ is continuous, the optimal control $u^*$ can switch from $-1$ to $1$ (or vice versa) only at points that are zeros of $m$. 

Thus, an important problem   is determining the number of zeros of $m$ in an
interval $[0,T]$, denoted $N(T,0)$. 
A well-known result \cite[Thm.~6-8]{athans_falb} establishes that if all the eigenvalues of~$A$ are real, then~$N(T,0) \leq n-1$ for any~$T>0$. Thus the control sequence is   parametrized by at most $n-1$ switching times $t_1<\dots<t_{n-1}$.

When complex modes dominate the switching function, $N(T,0)$ need not remain uniformly bounded in \(T\). To demonstrate this, consider a simple two-dimensional damped oscillator:
\begin{align*}
    \dot x =\begin{bmatrix}
    -\sigma&\omega\\-\omega & -\sigma 
    \end{bmatrix}x +\begin{bmatrix}
        0\\1
    \end{bmatrix}u,
\end{align*}
with $\sigma \ge 0$, $\omega > 0$. The eigenvalues of the system matrix are complex: $\lambda_1=
-\sigma - i\omega$ and~$\lambda_2= -\sigma+i \omega $. Solving the adjoint equation \eqref{adjoint} yields the switching function:
\begin{align*}
    m(t)  = p_2(t) 
          = a e^{\sigma t} \sin(\omega t + b)
\end{align*}
for   $a, b \in \R$ with $a \neq 0$. The zero-crossings  depend on the periodic term~$\sin(\omega t + b)$, so
the number of switches on~$[0,T]$ grows asymptotically with~$T$ as~$ \omega T/\pi $.

Typically the value of the adjoint state vector  is   not known explicitly, so it is natural to      replace~\eqref{eq:thimi} by  a  ``generic'' 
switching
function
\be\label{eq:abst_m}
m(t;p,b, A):=p^\top e^{-At}b
\ee
with~$p,b\in\R^n\setminus\{0\}$,~$A\in\R^{n\times n}$ and the pair~$(A,b)$ controllable. 
 The zeros of the switching function~$m$  are called the switching points. Since scaling~$m$ by a non-zero constant does not change its zeros, we always assume that~$\|p\|=1$.

 Ref. \cite{dalin2026application}  characterized the generic 
  switching behavior in the
  case where all eigenvalues of the system matrix~$A$ are purely imaginary and simple, and showed
  that the number of switching points is bounded from below by a linear   function of the time horizon $f(T)= c T$, and that $c$ can be determined 
  using the classical mean motion problem~\cite{Favorov2008} 
  (see also~\cite{dalin2026number} that established a similar linear lower bound in this case via an alternative approach based on estimating the $L_1 $ norm of the switching function).
  
 Here, we generalize these ideas to the case where~$A$ has arbitrary spectra.
 In particular, $A$ is not necessarily diagonalizable
 and can have both real eigenvalues and complex eigenvalues, and not only purely imaginary eigenvalues.  The main contributions of this note  include the following. First, we establish a general asymptotic framework for the switching behavior of linear time-optimal control systems, allowing the system matrix to have real and complex eigenvalues and nontrivial Jordan structure. Second, we show that the asymptotic switching behavior is determined by the dominant spectral modes of the switching function, selected successively by their real parts and Jordan degrees, and, in the presence of oscillatory modes, by the mean motion of the corresponding dominant trigonometric polynomial. This yields an explicit asymptotic lower bound on the switching density and, under additional conditions, an exact asymptotic switching rate. The mean motion rate can be  expressed 
 using  the Bohl-Weyl-Wintner formula, providing a one-dimensional integral representation in terms of Bessel functions that facilitates numerical evaluation. The theoretical results are illustrated on a linearized aircraft model.

The remainder of this note  is organized as follows. The next section briefly
reviews the mean motion problem  including the  Bohl–Weyl–Wintner~(BWW) formula, as this plays an important role in the analysis.
For the sake of completeness,   we also  include more details 
on the proof  of  this 
formula   in the Appendix.
Section ~\ref{sec:main} presents the main results.  
 The final section concludes, and describes some directions for further research.

 We use standard notation.
Vectors [matrices] are denoted by small [capital] letters.
The transpose of a matrix~$A$ is~$A^\top$.
 The set of integers is denoted by~$\mathbb{Z}$, and~$\mathbb N_0=\{0,1,2,\dots\}$. 
$\R^n$ [$\C^n$]
is the~$n$-dimensional vector space over the field of real [complex] scalars, and we abbreviate~$\R^1$ to~$\R$ [$\C^1$ to~$\C$]. 
With a slight abuse of notation, we call a number~$z=\alpha +\ii  \beta$, with~$\alpha,\beta\in\R$,
a complex number if~$\beta \not =0$.
We use  $|z|$ [$ \arg (z)$]
to denote 
the absolute value [argument]  of~$z$,
 so the polar representation of~$z$ is
$z=|z| e^{\ii \arg (z)}$. Also,
  $\real(z)$  [$\imag(z)$] denotes the  real [imaginary] part of~$z$,  and~$\bar z=|z| e^{-\ii \arg (z)}$ is the complex conjugate   of~$z$. 

\section{Preliminaries}\label{sec:pri}
We first review the mean motion problem~\cite{Favorov2008}, as this plays a  crucial  role
in our analysis.  

\subsection{Mean Motion Problem}
 Consider the  complex function $z:[0,\infty)\rightarrow\mathbb{C}$ given 
 by:
\begin{equation}\label{eq:rotating}
z(t) = \sum_{k=1}^n a_k e^{\ii (\lambda_k t+\mu_k)},
\end{equation}
where $a_k, \lambda_k, \mu_k \in \R$.
Thus, $z$ is the weighted
sum of the states of~$n$ linear oscillators. 
 
\begin{assumption}\label{assu:znonzero}
    Assume that~$z(t)\not =0$ for all~$t\geq 0$.
\end{assumption}
This assumption guarantees  that
$
\Phi(t):=\arg( z(t))
$
is a continuous function.
Using codimension arguments shows that for given~$\lambda_1,\dots,\lambda_n$, this assumption  holds for a generic set of parameters~$\{a_k,\mu_k\}_{k=1}^n$.\footnote{In this paper, generic  refers to a property that holds for  a dense open subset of parameters.}

\begin{problem}[mean motion]\label{prob:mean_motion}
Determine whether the asymptotic angular velocity of~$z(t)$, that is, the   limit  
 \be\label{eq:def_omeha}
 \Omega:=\lim_{t\to \infty}\frac{\Phi(t)}{t},
 \ee
   exists, and if so,   find  its value.
\end{problem}

For example, if~$\lambda_1=\dots=\lambda_n=\lambda$ then $ z(t) =e^{i \lambda  t}  \sum_{k=1}^n a_k e^{i\mu_k }$, so 
\begin{align*}
   \Phi(t) & =   \arg ( e^{i \lambda  t}  \sum_{k=1}^n a_k e^{i\mu_k }  )  
\\    
    &=    \lambda t+ \arg(\sum_{k=1}^n a_k e^{i\mu_k })  ,
\end{align*}
and thus the mean motion is~$\Omega=\lambda$.

Ref.~\cite{dalin2026application}  showed that Problem~\ref{prob:mean_motion} plays 
an important role 
 in the case where~$A$ has only simple and 
purely imaginary eigenvalues, as then the switching function  
has  the form~$m(t)=\sum_{k=1}^n
    a_k\cos(\lambda_k t+\mu_k)$, that is, 
   $ 
    m(t)=  \real(z(t)) , 
    $
   with $z $ defined in~\eqref{eq:rotating}. 
    If~$z$ has mean motion~$\Omega$ then 
   asymptotically the real part of~$z$ will have at least~$2 \frac{|\Omega | }{2\pi}  T$ zeros on~$[0,T]$, so~$m$ will have at least $  \frac{|\Omega|   }{ \pi} T $ zeros on~$[0,T]$.

\begin{assumption}\label{assumption:almost_non_resonance}
   The {\em almost non-resonance} condition for the function~$z(t)$ in~\eqref{eq:rotating}   is that 
   there  are no non-trivial relations of the form 
  \be
  \sum_{k=2}^n ( \lambda_k-\lambda_1) \ell_k=0, \text{ where every }
\ell_k \text{ is an  integer}.
\ee
\end{assumption}

\subsection{Bessel functions and the Bohl-Weyl-Wintner formula}
The solution of the mean motion problem
includes two  Bessel functions: 
\[
J_0(x) =  \frac1{\pi}  \int_0^\pi  e^{-\ii  x\cos(\phi)} d\phi,
\]
and
\[
J_1(x) = \frac { x   }{   \pi} \int_0^\pi \sin^{2 }(\phi)e^{- \ii x\cos(\phi)} d\phi.
\]
Given angles $\phi_1,\dots,\phi_n\in[0,2\pi)$, let  
\begin{align*}
    z(\phi_1,\hdots,\phi_n):=\sum_{k=1}^na_k e^{\ii \phi_k},
\end{align*}
This can be interpreted as the position of the endpoint of a multi-link robotic arm, where the $k$th link has length $|a_k|$, and the angle between links \(k\) and \(k+1\) is \(\phi_k\).

The canonical (Haar)  probability measure on the $n$-torus is
$
d\mu=\frac1{(2\pi)^n} d\phi_1\dots d\phi_n.
$
Let~$W_n(r)=W_n(r;a_1,\dots,a_n)$ denote the probability 
that~$|z|\leq r$, that is,
\begin{align}\label{eq:W_n(r)}  
W_n(r) =
\frac{1}{(2\pi)^n}\int_{ |z|\leq r }d\phi_1\cdots d\phi_n.
\end{align}
This is the ``volume'' of angles $\phi_1,\dots,\phi_n$ such that the position of the multi-link robotic arm is in the ball of radius~$r$ centered at the origin.

The BWW formula \cite{Wintner-1933} asserts  that \begin{equation}\label{BWW}
W_n(r;a_1,\dots,a_n)=r\int_0^\infty J_1(r\rho)\prod_{k=1}^n J_0(|a_k|\rho)d\rho.
\end{equation}
Note that this
reduces the computation of the $n$-dimensional integral for~$W_n$ in~\eqref{eq:W_n(r)}
to the one-dimensional integral in~\eqref{BWW}. 
\subsection{Solution of the mean motion problem}
  Weyl \cite{Weyl_meanmotion} proved that under Assumption~\ref{assu:znonzero} the mean motion~$\Omega$ in~\eqref{eq:def_omeha}   exists. If  Assumption~\ref{assumption:almost_non_resonance} also holds, then 
\begin{equation}\label{mean}
    \Omega=\sum_{k=1}^n \lambda_k V_k ,
\end{equation}
where
$
V_k : =W_{n-1}(a_k;a_1,\dots,a_{k-1},a_{k+1},\dots,a_n).
$
Furthermore,
the  $V_k$s  are non-negative, and $\sum_{k=1}^n V_k=1$ (see~\cite{Weyl_meanmotion}), so~\eqref{mean} is a convex combination  of the~$\lambda_k$s.
The BWW formula then gives a  closed-form integral
expression for the mean motion, namely, 
\begin{align*}
    \Omega=\sum_{k=1}^n\lambda_k a_k\int_0^\infty J_1(a_k\rho)\prod_{\ell\neq k}^{n}J_0\left(|a_\ell|\rho\right)d\rho.
\end{align*}

\section{Main Results}\label{sec:main}
We begin with an auxiliary result that will be used later on to analyze the switching function where~$A$ has both real and complex eigenvalues. 

\subsection{Asymptotic switching bound for polynomially weighted oscillations}

\begin{proposition} \label{prop:asw}
Let
$
m:[0,\infty)\to\mathbb R
$
be given by
\begin{equation}
    \label{eq:generalm}
m(t)
=
e^{\alpha t}
\left[
\sum_{j=1}^{q}\sum_{\ell=0}^{d_j}
H_{j,\ell}t^\ell
\cos(\Omega_jt+\psi_{j,\ell})
+
R(t)
\right],
\end{equation}
with
$
\alpha\in\R$, $
\Omega_j, H_{j,\ell} \geq 0$, $
d_j\in\mathbb N_0$,
and~$R(t) $ is a $C^1$ function. 
Let
$$
d_*:=\max\bigl\{\ell:H_{j,\ell}\neq0
\text{ for some }j\bigr\},
$$
and define the \emph{dominant oscillatory component} of~$m$ by 
$$
Q(t):=
\sum_{j=1}^{q}H_j
\cos(\Omega_jt+\psi_j),
\qquad
H_j:=H_{j,d_*}. 
$$
Assume that
\begin{enumerate}[(a)]
    \item the complex function
$$
z(t):=
\sum_{j=1}^{q}H_j e^{i(\Omega_jt+\psi_j)}
 ,
$$
is nonzero for all~$t\geq 0$, and   
 its  mean motion  
$$
\Omega
:=
\lim_{t\to\infty}
\frac{\arg z(t)}{t}.
$$
exists;
\item there exists \(\gamma>0\) such that
\begin{equation}
    \label{eq:Q+Q'}
|Q(t)|+|\dot Q(t)|\geq\gamma,
\text{ for all } t\geq 0; 
\end{equation}
\item all the terms in~$m$ not contained in \(Q\) are asymptotically
smaller than \(t^{d_*}\),   that is, the function 
$$
P(t):=
t^{-d_*}
\sum_{j=1}^{q}\sum_{\ell=0}^{d_j-1}
H_{j,\ell}t^\ell
\cos(\Omega_jt+\psi_{j,\ell})
+
t^{-d_*}R(t),
$$
satisfies 
\begin{equation}
P(t)\longrightarrow0 
\text{ and } 
\dot P(t)\longrightarrow0,
\text{ as } t\to\infty.
\label{A2}
\end{equation}
\end{enumerate}
Then there exists \(t_*\geq0\) such that, for all sufficiently large~\(T>t_*\), the number \(N(T,t_*)\) of zeros of \(m\) on
\([t_*,T]\) satisfies
\begin{equation}
N(T,t_*)
\geq
\frac{|\Omega|}{\pi}(T-t_*)
+
o(T-t_*).
\label{A3}
\end{equation}
In particular,
\begin{equation}
\liminf_{T\to\infty}
\frac{N(T,t_*)}{T}
\geq
\frac{|\Omega|}{\pi}.
\label{A4}
\end{equation}
\end{proposition}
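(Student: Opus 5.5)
Since $e^{\alpha t}>0$ and $t^{d_*}>0$ for $t>0$, the zeros of $m$ on $(0,\infty)$ coincide with those of
\[
g(t):=t^{-d_*}e^{-\alpha t}m(t)=Q(t)+P(t).
\]
The plan has three steps. First, count the zeros of $Q$ via the mean motion. Second, show that each zero of $Q$ persists as a nearby zero of $g$ once $t$ is large. Third, assemble the count.

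\emph{Step 1: zeros of $Q$.} Note that $Q(t)=\real z(t)$. Write $z(t)=\rho(t)e^{\ii\Phi(t)}$ with $\Phi$ continuous, which is possible by assumption (a). Then $Q(t)=\rho(t)\cos\Phi(t)$, and $Q$ vanishes exactly where $\Phi(t)\in\pi/2+\pi\mathbb Z$. Because $\Phi(t)/t\to\Omega$, on $[t_*,T]$ the function $\Phi$ sweeps an interval of length at least $|\Omega|(T-t_*)+o(T-t_*)$. By the intermediate value theorem it therefore crosses at least $\frac{|\Omega|}{\pi}(T-t_*)+o(T-t_*)$ distinct levels $\pi/2+k\pi$, each at some time $s_k$. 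These $s_k$ are distinct zeros of $Q$. I will also use that $Q$ is a trigonometric polynomial, so $Q$ and $\dot Q$ are bounded, say by $M_0$ and $M_1$, and $\ddot Q$ is bounded by $M_2$.

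\emph{Step 2: persistence of zeros (main step).} By assumption (b), at each zero $s$ of $Q$ we have $|\dot Q(s)|\ge\gamma$. Fix $\delta=\gamma/(2M_2)$. On $[s-\delta,s+\delta]$ this gives $|\dot Q|\ge\gamma/2$, so $Q$ is strictly monotone there and $|Q(s\pm\delta)|\ge\gamma\delta/2$.

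Now use assumption (c) to choose $t_*$ such that $|P(t)|<\gamma\delta/4$ and $|\dot P(t)|<\gamma/4$ for all $t\ge t_*-\delta$. Then on each window $[s_k-\delta,s_k+\delta]$:
\begin{itemize}
\item $g$ takes opposite signs at the two endpoints, so $g$ has a zero in the window;
\item $|\dot g|\ge\gamma/4$, so $g$ is strictly monotone and that zero is unique.
\end{itemize}
Distinct zeros of $Q$ are at least $2\delta$ apart, because $Q$ is monotone on each $\delta$-window around a zero. Hence the windows are disjoint, and the zeros of $g$ obtained this way are distinct. Only the windows touching the endpoints $t_*$ and $T$ are affected by boundary effects, and these cost $O(1)$ zeros.

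The delicate point is that the bound on $\dot P$ is genuinely needed, not just the bound on $P$. It ensures that no extra cancellations occur and that the map from zeros of $Q$ to zeros of $g$ is injective.

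\emph{Step 3: conclusion.} Combining Steps 1 and 2 gives
\[
N(T,t_*)\ge \#\{k:\ s_k\in[t_*+\delta,\,T-\delta]\}\ge \frac{|\Omega|}{\pi}(T-t_*)+o(T-t_*),
\]
which is \eqref{A3}. Dividing by $T$ and taking $\liminf_{T\to\infty}$ yields \eqref{A4}.

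One small gap remains in Step 1: the crossings must be counted for the specific $t_*$ chosen in Step 2. This is harmless, because $\Phi(T)-\Phi(t_*)=\Omega T+o(T)$ still holds for fixed $t_*$. I expect Step 2, namely choosing $\delta$ and $t_*$ uniformly using (b) and the boundedness of $\ddot Q$, to be the main technical point.
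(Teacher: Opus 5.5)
Your proof is correct. Its overall architecture is the paper's: divide out $e^{\alpha t}t^{d_*}$, show that the transversal zeros of $Q$ survive the perturbation $P$, and count the zeros of $Q=\real z$ through the mean motion. The genuine difference is the localization step.

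The paper fixes $\delta<\gamma/2$ and works with the connected components of the sublevel set $\{t\ge t_1: |Q(t)|<\delta\}$. There (b) gives $|\dot Q|>\gamma-\delta$, so $Q$ and $Q+P$ are both strictly monotone on each component. Comparing the boundary values $Q=\pm\delta$ then yields a bijection between the zeros of $Q$ and those of $Q+P$ after $t_1$. This is the two-sided statement $N(T,t_*)=N_Q(T,t_*)+O(1)$ reused in Theorem~\ref{thm:main}, and it needs no bound on $\ddot Q$.

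You instead put a window of fixed radius $\delta=\gamma/(2M_2)$ around each zero of $Q$, using the bound on $\ddot Q$ that a trigonometric polynomial provides. This gives explicit localization of the zeros of $m$. It also gives uniform separation of the zeros of $Q$, by strictly more than $2\delta$, since $|\dot Q|\ge\gamma/2$ on the union of two overlapping windows. But it yields only an injection, hence only the inequality \eqref{A3}, which is all the proposition asks. To recover the paper's equality you would also need that $|Q|$ is bounded away from $0$ off the windows, which follows from (b) by the same kind of estimate. (If $\sum_j H_j\Omega_j^2=0$, take any $M_2>0$; then $Q$ is a nonzero constant and $\Omega=0$.)

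One correction to your commentary: the bound on $\dot P$ is not what makes your map injective. Injectivity comes from the disjointness of the windows, which is a property of $Q$ alone. The existence of a zero of $Q+P$ in each window uses only $|P|<\gamma\delta/4$ and the intermediate value theorem. The bound $|\dot P|<\gamma/4$ only buys uniqueness of that zero within its window, which matters for an upper bound or for the bijection, not for \eqref{A3}. So your argument in fact proves \eqref{A3} under $P(t)\to0$ alone. The same holds for the paper's proof, whose Case~2 uses only $|P|<\delta$.
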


Before proving this result, we make  several comments.
First, the structure in~\eqref{eq:generalm} 
covers in particular  the 
  case 
of 
a dominant Jordan block of size \(k+1\) in the system matrix~$A$, as this produces terms of the form
$
  e^{\alpha t} t^\ell  \cos(\omega t+\xi)$,
  $ \ell= 0,1,\dots,k$, 
in the switching function. 

Second, note that if
  the mean motion of~$z$  is expressed through the BWW
weights as
$$
\Omega=\sum_{j=1}^{q}\Omega_jV_j,
\qquad
V_j\geq0,\qquad
\sum_{j=1}^{q}V_j=1,
$$
then \eqref{A4}
immediately yields the lower bound 
$$
\liminf_{T\to\infty}
\frac{N(T,t_*)}{T}
\geq
\frac{1}{\pi}
\left|
\sum_{j=1}^{q}\Omega_jV_j
\right|.
$$

Third, consider the case~$q=1$ and~$d_1=0$, that is,
$Q(t)=H_1\cos(\Omega_1 t +\psi _1)$. Then the
complex function is~$z(t)=e^{  \ii (\Omega_1 t+\psi_1) } $, 
so the mean motion is just~$\Omega=\Omega_1$. Note that 
\begin{align*}
    |Q(t)|+|\dot Q(t)|  & = H_1 |\cos(\Omega_1t+ \psi_1)|+ H_1 | \sin(\Omega_1t+ \psi_1)|, 
\end{align*}
so
$
  (  |Q(t)|+|\dot Q(t)| )^2 >   H_1^2
$
implying that~\eqref{eq:Q+Q'} indeed holds for $\gamma=H_1$. 
More generally, 
the uniform transversality condition~\eqref{eq:Q+Q'}
is sufficient to guarantee that the zeros of \(Q\) are uniformly simple. For periodic \(Q\), this condition follows whenever \(Q\) and \(\dot Q\) do not vanish simultaneously. For 
quasiperiodic~\(Q\), uniform transversality is a stronger requirement and is therefore imposed explicitly.

\begin{IEEEproof}[Proof of Prop.~\ref{prop:asw}]
Since
$
m(t)
=
e^{\alpha t}t^{d_*}
(Q(t) +P(t) )   ,
$
and
$
e^{\alpha t}t^{d_*}>0
 \text{ for }t>0,
$
the zeros of \(m\) on \((0,\infty)\) are precisely the zeros of
$
\widetilde m(t):=Q(t)+P(t).
$
 Fix $\delta \in (0, \gamma/2)
$.
By \eqref{A2}, 
there exists \(t_1  \geq0\) such that
\begin{equation}
|P(t)|<\delta 
\text{ and } 
|\dot P(t)|<\delta,
\text{ for all } t\geq t_1.
\label{A6}
\end{equation}
Define the sets
$$
\mathcal T_{\rm far}
:=
\{t\geq t_1:|Q(t)|\geq\delta\},
$$
and
$$
\mathcal T_{\rm near}
:=
\{t\geq t_1:|Q(t)|<\delta\}.
$$
For \(t\in\mathcal T_{\rm far}\), \eqref{A6} gives
$
|\widetilde m(t)|
\geq
|Q(t)|-|P(t)|
>
0.
$
Hence \(\widetilde m\), and therefore \(m\), has no zero in
\(\mathcal T_{\rm far}\).

Now let \(I=(a,b)\) be a connected component of
\(\mathcal T_{\rm near}\). For every \(t\in I\), assumption \eqref{eq:Q+Q'}  
gives
$$
|\dot Q(t)|
\geq
\gamma-|Q(t)|
>
\gamma-\delta
>
 {\gamma}/{2}.
$$
Since \(|\dot P(t)|<\delta<\gamma/2\), it follows that
$$
|\dot{\widetilde m}(t)|
\geq
|\dot Q(t)|-|\dot P(t)|
>0  , 
$$
so
 \(\widetilde m\) is strictly monotone on \(I\), and consequently
it has at most one zero in \(I\). The same argument shows that \(Q\) also has
at most one zero in \(I\).

Divide the time interval  $[t_1,\infty)$
as the union of intervals from~$ \mathcal{T}_{far} $, 
and maximal intervals~$I_k:=(a_k,b_k) \subseteq  \mathcal{T}_{near} $. Maximal here means that each~$I_k$ is not strictly contained in a larger interval in $ \mathcal{T}_{near} $. We already saw that on intervals  in~$ \mathcal{T}_{far} $ both~$Q$ and~$\tilde m$ have no switching points, so we can ignore them. Consider a maximal interval~$I_k = (a_k,b_k) \subset  \mathcal{T}_{near} $. 
We analyze
the number of switching points in the following possible cases:

 \noindent{Case 1.} 
If~$b_k-a_k=\infty$ then 
$Q$ has up to a single switching on this infinite  interval,
and so does~$\tilde m$.

\noindent{Case 2.} 
Suppose that~$Q(a_k^-)=\delta$ and~$Q(b_k^+)=-\delta$ (or vice versa). Then $Q$ changes sign on~$I_k$ and monotonicity implies that it  has a single switching in~$I_k$. Also, 
\[
\tilde m(a_k) = Q(a_k)+P(a_k)=\delta +P(a_k)>0,
\]
and similarly~$\tilde m(b_k)<0$, so $\tilde m$  also
has a single switching in~$I_k$.

\noindent{Case 3.} 
 Suppose that~$Q(a_k^-)= Q(b_k^+)= \delta$ (or both are equal to~$-\delta$). Then monotonicity implies that $Q(t)=\delta$ for all~$ t\in I_k$, so in particular  it has zero switching points  
 in the interval. Arguing as in Case~2 shows that $\tilde m$ also has zero switching points  
 in the interval.

Thus, after some finite time \(t_*\geq t_1\), there is a one-to-one
correspondence between the zeros of \(m\) and the zeros of \(Q\). Hence
\begin{equation}
N(T,t_*)
=
N_Q(T,t_*)+O(1),
\label{A7}
\end{equation}
where \(N_Q(T,t_*)\) denotes the number of zeros of \(Q\) on
\([t_*,T]\).
Since
$ 
Q(t)=\Re( z(t)),
 $
the mean-motion zero-count result for the real part of \(z\)  
gives

\begin{equation} 
N_Q(T,t_*)
\geq
\frac{|\Omega|}{\pi}(T-t_*)
+
o(T-t_*).
\label{A8}
\end{equation}
Combining \eqref{A7}
and~\eqref{A8}
yields~\eqref{A3}, and
division by~\(T\) gives~\eqref{A4}.
This completes the proof of Prop.~\ref{prop:asw}.
\end{IEEEproof}


\subsection{Asymptotic switching density  for a general linear system}

Consider the  linear system~\eqref{eq:lin_cont_sys}
with~\(A\in\mathbb R^{n\times n}\)  and~\((A,b)\)  controllable.
Since the  bound
in the case where $A$ has only real eigenvalues  is well-known, we assume that $A$ has at least one complex eigenvalue. 
Let~\eqref{eq:abst_m} 
be a switching function associated with a Pontryagin extremal.
A
    Jordan decomposition of~\(-A\) 
implies that 
\begin{equation}
    \label{eq:mstru}
m(t)
=
\sum_{\nu}
e^{\alpha_\nu t}t^{d_\nu}
H_\nu\cos(\omega_\nu t+\phi_\nu),
\end{equation}
with \(H_\nu > 0\),~$\omega_\nu\geq 0$ and at least one $\omega_v  $ is positive. Here,  terms with \(\omega_\nu=0\) represent the contributions of
real eigenvalues.  

Define the \emph{active dominant real part} of~$m$  by 
$$
\alpha_*:=\max_\nu \{\alpha_\nu \} 
$$
and, among the terms with real part \(\alpha_*\), define the \emph{dominant
Jordan degree} by 
$
d_*:=
\max\{d_\nu:\alpha_\nu=\alpha_*\}.
$
Let
$$
\mathcal D:=
\{\nu:\alpha_\nu=\alpha_*,\ d_\nu=d_*\},
$$
and define the \emph{dominant oscillatory component} of~$m$  by 
\begin{equation}
Q(t):=
\sum_{\nu\in\mathcal D}
H_\nu\cos(\omega_\nu t+\phi_\nu).
\label{eq:domQ}
\end{equation}

 We can now give the  main result in this section.  
 \begin{theorem} \label{thm:main} 
The switching  function satisfies 
\begin{equation}
    \label{eq:m_simpi}
m(t)
=
e^{\alpha_*t}t^{d_*}
\bigl(Q(t)+P(t)\bigr),
\end{equation}
where
\begin{equation}
    \label{eq:P_DECAY}
P(t)\longrightarrow0 
\text{ and }
\dot P(t)\longrightarrow0
\text{ as } t\to\infty.
\end{equation}
Suppose that:
\begin{enumerate}[(a)]
    \item 
\(Q\not\equiv0\), and   there exists
\(\gamma>0\) such that
\be \label{eq:qcond}
|Q(t)|+|\dot Q(t)|\geq\gamma,
\text{ for all } t\geq0.
\ee 
\item 
the complex function 
$
z(t):=
\sum_{\nu\in\mathcal D}
H_\nu e^{\ii (\omega_\nu t+\phi_\nu)},
$
satisfies 
$
z(t)\neq0 $
  for all  $t\geq0,
$
and   the mean motion
$
\Omega:=
\lim_{t\to\infty}
\frac{\arg z(t)}{t}
$
exists.
\end{enumerate}
Then there exists \(t_*\geq0\) such that the number~\(N(T,t_*)\)
of switching points of~\(m\) on~\([t_*,T]\) satisfies

$$
N(T,t_*)
=
N_Q(T,t_*)+O(1),
\label{9}
$$
where \(N_Q(T,t_*)\) is the number of zeros of \(Q\) on
\([t_*,T]\). In particular,
$$
N(T,t_*)
\geq
\frac{|\Omega|}{\pi}(T-t_*)
+
o(T-t_*),
\text{ as } T\to\infty,
\label{10}
$$
and hence
$$
\liminf_{T\to\infty}
\frac{N(T,t_*)}{T}
\geq
\frac{|\Omega|}{\pi}.
$$
\end{theorem}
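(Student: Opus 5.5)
The plan is to reduce Theorem~\ref{thm:main} to Prop.~\ref{prop:asw}. The only genuinely new ingredient is the decomposition \eqref{eq:m_simpi} with the decay property \eqref{eq:P_DECAY}. Once that is established, the hypotheses (a)--(b) of the theorem are exactly hypotheses (a)--(b) of Prop.~\ref{prop:asw}, and \eqref{eq:P_DECAY} is hypothesis (c). The conclusion $N(T,t_*)=N_Q(T,t_*)+O(1)$ is then precisely \eqref{A7} from that proof, and the lower bound follows from \eqref{A8}.

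To obtain the decomposition, I will start from \eqref{eq:mstru} and define
\[
P(t):=\sum_{\nu\notin\mathcal D} e^{(\alpha_\nu-\alpha_*)t}t^{d_\nu-d_*}H_\nu\cos(\omega_\nu t+\phi_\nu),
\]
so that \eqref{eq:m_simpi} holds identically for $t>0$ by the definition of $Q$ in \eqref{eq:domQ}. Each index $\nu\notin\mathcal D$ falls into one of two classes.
\begin{enumerate}[(i)]
\item If $\alpha_\nu<\alpha_*$, the corresponding term is $e^{-\beta t}t^{k}\cos(\cdot)$ with $\beta>0$ and $k\in\mathbb Z$. This term and its derivative, which is bounded by a constant times $e^{-\beta t}(t^{k}+t^{k-1})$, tend to zero.
\item If $\alpha_\nu=\alpha_*$ and $d_\nu<d_*$, the term is $t^{-k}H_\nu\cos(\omega_\nu t+\phi_\nu)$ with $k\ge1$. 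It tends to zero, and its derivative $-kt^{-k-1}H_\nu\cos(\cdot)-\omega_\nu t^{-k}H_\nu\sin(\cdot)$ also tends to zero, since $k\ge1$.
\end{enumerate}
Since the sum over $\nu$ is finite, this yields \eqref{eq:P_DECAY}.

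A small bookkeeping point must be checked. Within a single Jordan chain, \eqref{eq:mstru} produces several terms $t^\ell\cos(\omega t+\psi_\ell)$ with the same frequency. These can be merged, for each fixed $(\alpha,d,\omega)$, into a single term $H\cos(\omega t+\phi)$ with $H\ge0$. Any pair whose merged amplitude vanishes is simply dropped, so that $\alpha_*$ and $d_*$ are defined by terms that are actually present. This matches the form \eqref{eq:generalm} with $R$ collecting the exponentially smaller terms. The form \eqref{eq:generalm} factors out a single exponential $e^{\alpha t}$, and I will take $\alpha=\alpha_*$. The $C^1$ requirement on $R$ holds automatically, since all terms are smooth.

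I expect no serious obstacle, since the substantive counting work has already been done in Prop.~\ref{prop:asw}. The point needing the most care is the claim $\dot P\to0$ for the class (ii) terms. There the oscillatory factor differentiates to $\omega_\nu t^{-k}$, which decays only because $d_\nu<d_*$ strictly. This is exactly why the dominant set must be defined through both $\alpha_*$ and $d_*$, rather than through $\alpha_*$ alone.

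Finally, the zeros of $m$ on $(0,\infty)$ coincide with those of $Q+P$, and Prop.~\ref{prop:asw} supplies $t_*$ together with the one-to-one correspondence of zeros. The liminf statement then follows by dividing by $T$.
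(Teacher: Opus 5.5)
Your proposal is correct and follows the paper's proof essentially step for step. You put the non-dominant terms into $P$, check that each term and its derivative tend to zero in the two cases $\alpha_\nu<\alpha_*$ and $\alpha_\nu=\alpha_*$, $d_\nu<d_*$, and then invoke Prop.~\ref{prop:asw}. You also point out, correctly, that $N(T,t_*)=N_Q(T,t_*)+O(1)$ is the intermediate estimate \eqref{A7} from the proof of Prop.~\ref{prop:asw} and not part of its statement, which the paper leaves implicit (and the theorem's hypotheses (a) and (b) are the proposition's (b) and (a), not the same labels).
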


If, in addition, the frequencies in \(z\) satisfy the almost
non-resonance condition required by the mean motion theorem, then
$
\Omega $ 
can be expressed using the BWW formula. 

This result shows in 
 particular that  the asymptotic switching behavior of~$m$  is determined first
by the dominant real part~\(\alpha_*\), then by the largest Jordan
degree~\(d_*\), and finally by the mean motion of the corresponding
 component~\(Q\).

Note also  that if~$\omega_\nu=0$ for all~$\nu\in\mathcal D$  (i.e. no complex eigenvalue is dominant) 
then $Q(t) $ is a constant function, and if it is not zero then~\eqref{eq:qcond} holds, the function~$z(t)$ is also constant  and thus $\Omega=0$.

\begin{IEEEproof}
 By definition of \(\alpha_*\) and \(d_*\), every term in~\eqref{eq:mstru} belonging
to \(\mathcal D\) has the common factor~$
e^{\alpha_*t}t^{d_*},
$
whereas every remaining term is smaller after division by this factor.
Thus,
$
m(t)
=
e^{\alpha_*t}t^{d_*}
\bigl(Q(t)+P(t)\bigr),
$
with \(Q\) given by~\eqref{eq:domQ}.
To verify~\eqref{eq:m_simpi}, consider a term corresponding to~\(\alpha_\nu=\alpha_*\) and~\(d_\nu<d_*\). After division by~\(e^{\alpha_*t}t^{d_*}\), such a term becomes
$
O(t^{d_\nu-d_*})=O(t^{-1}) , 
$
and
its derivative is also \(O(t^{-1})\). Next, consider a term with~\(\alpha_\nu<\alpha_*\). Writing~$
\delta_\nu:=\alpha_*-\alpha_\nu>0,
$
its normalized contribution is~$
O\left(t^{d_\nu-d_*}e^{-\delta_\nu t}\right),
$
and its derivative satisfies the same type of bound. Thus every
non-dominant term, together with its derivative, converges to zero.
This proves~\eqref{eq:P_DECAY}.
Since
$
e^{\alpha_*t}t^{d_*}>0
$ for all~$t>0$,
the zeros of~\(m\) coincide with the zeros of
$ 
\widetilde m(t):=Q(t)+P(t) ,
$ 
and using Prop.~\ref{prop:asw}
completes the proof. 
\end{IEEEproof}

\begin{remark}
    Consider the 
  case of a unique dominant Jordan mode. Then 
$
m(t)=\exp(\alpha t)
\left ( At^k\cos(\Omega t+\psi) +P(t) \right ) , 
$
with~$
P(t)=o(t^k)$. 
Note that  
\[
m(t)=\exp(\alpha t)t^k
\left (A\cos(\Omega t+\psi) +t^{-k}P(t) \right ), 
\]
so the zeros  asymptotically satisfy
\[
t_j = \frac{ \frac{\pi}{2} -\psi+j \pi }{\Omega} +O(1).
\]
Thus, 
in this case we actually get the stronger result: 
$
\lim_{T\to\infty} \frac{N(T,0)}{T} =\frac{\Omega}{\pi} . 
$
\end{remark}

\section{An Application}
We demonstrate the theoretical
results using an example from~\cite[Ch. 3]{sontag_book_control_theory}. Consider a linearized model for an airplane 
flying at an altitude~$h(t)$ [meters],
constant speed   $c>0$ [meters/sec],
and  pitch angle~$\phi(t)$ [radians] 
with respect to the horizontal. 
Its flight path forms an angle of~$\alpha(t)$ [radians]
 with the horizontal  i.e., for $\alpha(t)>0$ [$\alpha(t)<0$]
 the plane is gaining altitude[ [descending].
 
The mathematical  model is
\begin{align*}
    \dot \alpha &= a_1(\phi  -\alpha),\\
\ddot \phi &=   -\omega^2 (\phi -\alpha - a_2 u ),\\
\dot h  &= c \alpha ,
\end{align*}
where~$\omega >0$ 
is a constant representing a natural oscillation frequency and 
$a_i>0$. 
The control $u$ is proportional to the
position of the elevators (see~\cite[Chapter 3]{sontag_book_control_theory}).  

Letting
$x_1 = \alpha$,
$x_2 = \phi$, $x_3 =\dot\phi$, and $x_4 = h$ yields~\eqref{eq:lin_cont_sys} with
$A=\begin{bmatrix}
    -a_1 & a_1 &0 &0 \\
    0& 0& 1& 0\\
    \omega^2 & -\omega^2 & 0 & 0  \\ 
    c&0 &0 &0 
\end{bmatrix},
$ and $b=\begin{bmatrix}
 0\\0\\ a_2 \omega^2 \\0     
\end{bmatrix}$.
Here
$
    \det ( \begin{bmatrix}
        b&Ab&A^2 b&A^3 b
    \end{bmatrix})=-a_1^2 a_2^4
    \omega^8 c,
$
so the system is controllable.

The eigenvalues of~$A$ are $0,0, (- a_1  \pm \sqrt{ a_1^2-4\omega^2 } )/2 $. We assume that~$a_1<2 \omega$, so $A$ has two complex eigenvalues besides the repeated zero eigenvalue. 
Eq.~\eqref{eq:abst_m} gives
$
m(t) =c_0+c_1 t+  \exp(a_1 t/2) \cos(\Omega t+\phi),
$
with $\Omega =\sqrt{\omega^2-\frac{a_1^2}{4}}$.
Note that
$
m(t)=  \exp(a_1 t/2)  ( Q(t) + P(t)   ), 
$
with~$P(t)=(c_0+c_1 t)\exp(-a_1t/2)$ and~$Q(t) = \cos(\Omega t+\phi)$.
Thus, $m(t)$ has the same  switching points as~$\tilde m(t):=Q(t)+P(t)$. 
By Thm.~\ref{thm:main},
there exists   $t^*>0$ such that for all $T\gg t^*$, we have 
\begin{align*}
        N(T,t^*)=\frac{\Omega}{\pi}(T-t^*)+o(T-t^*) . 
\end{align*}

As a specific numerical
example,  take
$
a_1=1,\; \omega=2,\;  a_2=1/4,\;  c=1,
$ 
initial condition~$x(0)=0$, 
and control~$u(t)\in[-1,1]$. We used MATLAB to compute PMP extremals for several $T$ values in the range~$[1,15]$
   and counted the number of switchings~$N(T,0)$. 
Fig.~\ref{fig:plane}  shows $N(T,0)$  and the
line~$
\frac{\Omega}{\pi}  T   = \frac{\sqrt{15} }{2\pi} T.
$
The theory predicts that for large~$T$, the number of switches follows the 
line with slope~$\frac{\Omega}{\pi}   $.
It may be seen that this indeed holds. 

\begin{figure}[t]
\centering
  \includegraphics[scale=0.45]{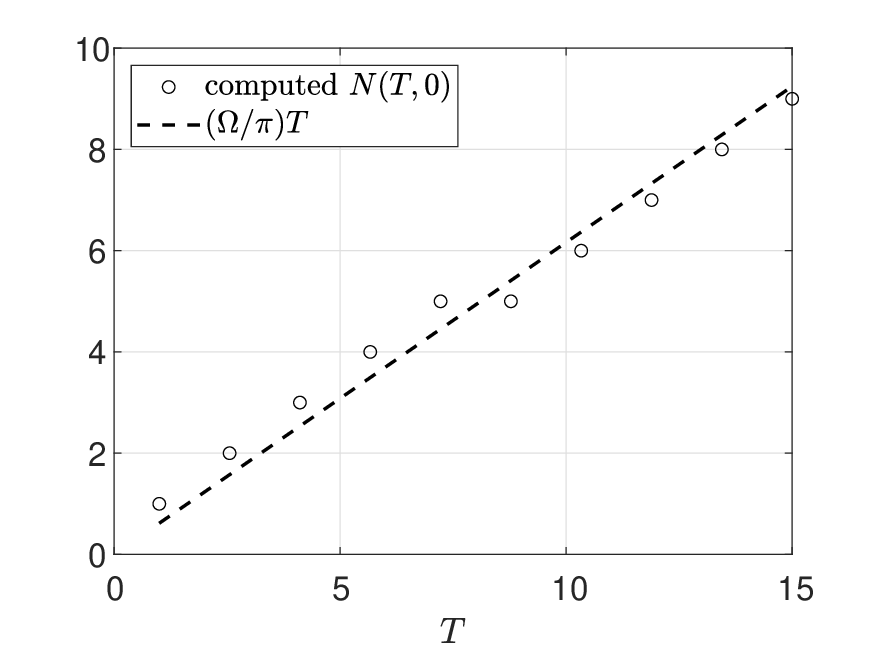}
  \vspace{-2mm} 
  \caption{Number of switchings $N(T,0) $ 
  in time-optimal controls~circles) and the asymptotic linear bound   predicted by the theory~(dashed line), as a function of the final time~$T$.} 
  \label{fig:plane}
\end{figure}

\section{Discussion}
 We characterized the asymptotic switching behavior of time-optimal controls for linear systems with an arbitrary spectrum, extending prior results that were restricted to simple and purely imaginary eigenvalues. The  asymptotics are governed lexicographically: first by the dominant real part $\alpha^*$ of the eigenvalues contributing to the switching function, then by the size~$d^*$ of the corresponding dominant Jordan block, and finally, when~$\alpha^*$ corresponds to a complex mode, by the mean motion  of the resulting dominant oscillatory component.
  
When the dominant complex mode is a single, non-repeated conjugate pair, the asymptotic count becomes   exact. 
Determining other  conditions under which this lower bound is in fact tight is a natural research  question.

\subsubsection*{Acknowledgments} We used ChatGPT to assist with proofreading and checking the manuscript.
 \subsection*{Appendix: proof of the BWW formula }
The~$p$-order Bessel function~$J_p:\mathbb{C}\to \mathbb{C}$  is defined by 
\be\label{eq:p-bessel}
J_p(x) := \frac {(x/2)^p}{  \Gamma(p+\frac1{2}) \sqrt{\pi}}\int_0^\pi \sin^{2p}(\phi)e^{-\ii   x\cos(\phi)} d\phi,
\ee
where~$\Gamma$ is the Gamma function. 
 We first relate Bessel functions to spheres. Let~$1_{B^n}:\R^n\to\{0,1\}$ denote the  indicator function of the $n$-dimensional ball, that is,~$1_{B^n}(x)=1$ if~$|x|\leq 1$ and~$1_{B^n}(x)=0$, otherwise.  The Fourier transform of~$1_{B^n}$  is
\begin{align*} 
\hat 1_{B^n}(\xi)&= \frac1{(2\pi)^{n/2}}
\int_{|x|\leq 1} e^{\ii x^\top  \xi } dx_1\dots dx_n\nonumber\\& = \frac1{(2\pi)^{n/2}}
\int_{|x|\leq 1} e^{-\ii x^\top  \xi } dx_1\dots dx_n . 
\end{align*}
Since~$1_{B^n}(x)$ is a radial function (that is, it only depends  on~$|x|$),
$\hat 1_{B^n}(\xi)$ will   depend on~$|\xi|$   \cite[Chapter~IV]{Four_intro}  and it is
  sufficient    to consider the particular
  case where~$\xi=\begin{bmatrix}
0&\dots&0&r
\end{bmatrix}^\top$,
with~$r>0$. Then,
\begin{align*}
  \hat 1_{B^n}(\xi)&
 = \frac1{(2\pi)^{n/2}} \int_{|x|\leq 1}  e^{-\ii  x_n r} dx_1 \dots dx_n\\
  &=  \frac { V_{n-1}}{(2\pi)^{n/2}}
  \int_{-1}^1 (\sqrt{ 1-x_n^2})^{n-1 }  e^{-\ii x_n r} dx_n,
\end{align*}
where~$V_{n-1} $ is the volume of the~$(n-1)$-dimensional unit ball.
Setting~$x_n= \cos (\phi)$ and using the fact  that~$V_d=\frac{\pi^{d/2}} { \Gamma(1+(d/2)) } $ gives
   \begin{align*}
  \hat 1_{B^n}(\xi)
  &=  \beta  \int_{0}^\pi \sin^{n}(\phi)    e^{-\ii  \cos(\phi)  r} d \phi ,
\end{align*}
with $\beta:= ( 2 ^{n/2} \Gamma(1+((n-1)/2))\sqrt{\pi})^{-1} $, 
and using~\eqref{eq:p-bessel} yields
$
 \hat 1_{B^n}(\xi)  = |\xi|^{-n/2}J_{n/2}(|\xi|).
 $
For~$n=2$, we get 
\be\label{eq:ind_func_B2}
 \hat 1_{B^2}(\xi)  = |\xi|^{-1}J_{1}(|\xi|).
 \ee
Let~$\xi:=\sum_{k=1}^n  a_ke^{i\phi_k}\in \mathbb{C}$.
Fix~$r>0$.
By definition,
\be\label{eq:bydef_W}
W_n(r)=\frac1{(2\pi)^n}\int_{\T} 1_{|\xi|\leq r}(\xi)  \prod_{k=1}^n d\phi_k . 
\ee
Writing
$1_{|\xi|\leq r}(\xi)=1_{B^2}(\xi/r),
$ and applying an inverse Fourier transform to~\eqref{eq:ind_func_B2} yields
\begin{align*}
1_{|\xi|\leq r}(\xi)&=1_{B^2}(\xi/r)
=\frac{1}{2\pi}\int_\C |\eta|^{-1} J_1(|\eta|)e^{\ii \Re  (\bar\eta \xi/r )} d\eta .
\end{align*}
Defining the integration variable
  $\theta:=\eta/r$ gives
\[  
1_{|\xi|\leq r}(\xi)= \frac{r}{2\pi}\int_\C |\theta|^{-1} J_1(r|\theta|)e^{\ii \Re  (\bar\theta \xi )} d\theta.
\] 
Substituting  this in~\eqref{eq:bydef_W} yields
\begin{align*}
W_n(r)&=\frac{r}{ 2\pi }\int_{\T}  \int_\C |\theta|^{-1} J_1(r|\theta|)e^{\ii \Re  (\bar\theta \sum_{k=1}^n a_ke^{i\phi_k}  )} d\theta\\&\times \frac1{(2\pi)^n} \prod_{k=1}^n d\phi_k\\
&= \frac{1}{ 2\pi }\frac{1}{(2\pi)^n}\int_{\T}  \int_\C |\theta|^{-1} J_1(r|\theta|)
\\&
\times
\left (  \prod_{k=1}^n e^{\ii \Re  (\bar\theta  a_k e^{i\phi_k}  )}d\phi_k\right)  d\theta  .
\end{align*}
Write~$ \theta$ in the polar representation~$   \theta=\rho e^{\ii \alpha }$, where $\rho:=|\theta|$. Then
\begin{align*}
 \frac1{2\pi}  \int_0^{2\pi } e^{ \ii \Re  (\bar\theta  a_k e^{i\phi_k}  )}d\phi_k &=\frac1{2\pi}
  \int_0^{2\pi }  e^{ \ii  \Re  ( \rho a_k   e^{\ii (\phi_k-\alpha)}  )}d\phi_k\\
  &=\frac1{2\pi}  \int_0^{2\pi }  e^{\ii  \rho \Re  (  a_k   e^{i \beta_k  }  )}d\beta_k\\
  &= \frac1{2\pi}  \int_0^{2\pi }  e^{\ii  \rho   |a_k|  \cos(\beta_k)  }d\beta_k\\
  &= J_0(\rho |a_k|).
\end{align*}
Thus,
\begin{align*}
W_n(r)
&= \frac{1}{ 2\pi }   \int_\C |\theta|^{-1} J_1(r|\theta|)\left (  \prod_{k=1}^n J_0(|\theta| |a_k|)   \right)  d\theta  .
\end{align*}
 The integrand
$
 |\theta|^{-1} J_1(r|\theta|) \prod_{k=1}^n J_0(|\theta| |a_k|)
 $
 is a radial function, and using known results on the integration of radial functions (see, e.g.,~\cite[Chapter~6]{real_analysis_stromberg})
gives
\begin{align*}
W_n(r)
&=  r \int_0^\infty  J_1(r \rho ) \left (  \prod_{k=1}^n  J_0(\rho |a_k|)    \right)  d\rho,
\end{align*}
and this completes the proof of the BWW formula.

\bibliographystyle{IEEEtran}
 \bibliography{omri}

@string{CDC26="Proc.\ 26th IEEE Conf. on Decision and Control"}

@string{SCL="Systems Control Lett."}

@string{JDE="J. Diff. Eqns."}

@string{IEEE="Proc. IEEE"}

@INPROCEEDINGS{suss_nice_reach,
  author={Sussmann, H. J.},
  booktitle=CDC26, 
  title={Reachability by means of nice controls}, 
  year={1987},
  volume={26},
  pages={1368-1373},
}

@string{j="{\bf Journal version available}"}

@article{Weyl_meanmotion,
author={Hermann Weyl},
title={Mean Motion},
journal={Amer. J Math.},
volume={60},
pages = {889-896},
year = {1938},
}

@book{athans_falb,
 title="Optimal Control:
An Introduction to the Theory and Its Applications",
author="Michael Athans and Peter L. Falb",
publisher="McGraw-Hill Book Company",
year="1966",
}

@book{Four_intro,
    title = "Introduction to Fourier Analysis on Euclidean Spaces",
    author = "E. M. Stein and G. Weiss",
    publisher = "Princeton University Press", 
    address="Princeton, NJ",
    year = "1990"
}

@article{dalin2026number,
  title={On the Number of Switching Points in Time-Optimal Controls of a Linear System},
  author={Dalin, Omri and Ovseevich, Alexander},
  journal={Proceedings of the Steklov Institute of Mathematics},
  volume={332},
  pages={75--80},
  year={2026},
  publisher={Pleiades Publishing}
}

@article{dalin2026application,
    author = "Omri Dalin and Alexander Ovseevich  and Michael Margaliot",
    title = "An Application of the Mean Motion Problem
to Time-Optimal Control",
    journal = "IEEE Control  Systems Letters", volume={10}, pages={307-312}, 
    year = "2026",
}

@inproceedings{grognard2003computation,
  title={Computation of time-optimal switchings for linear systems with complex poles},
  author={Grognard, Fr{\'e}d{\'e}ric and Sepulchre, Rodolphe},
  booktitle={2003 European Control Conference (ECC)},
  pages={2190-2195},
  year={2003},
  organization={IEEE}
}

@article{MARGALIOT_variational,
title = {Stability analysis of switched systems using variational principles: An introduction},
journal = {Automatica},
volume = {42},
number = {12},
pages = {2059-2077},
year = {2006},
author = {Michael Margaliot}
}

@article{Favorov2008,
    author ="S. {YU.} Favorov",
    title = "LAGRANGE’S MEAN MOTION PROBLEM",
    journal = "St. Petersburg Math. J.",
    year = "2008", volume="20",number="2", pages="319-324",
}

@book{bonnard_chyba,
    author = "Bernard Bonnard 
and Monique Chyba",
    title ="Singular Trajectories and their Role in Control Theory" ,
    publisher = "Springer",
    year = "2003",
}

@book{sontag_book_control_theory,
    author ="E. D. Sontag" ,
    title = "Mathematical Control Theory: 
Deterministic Finite Dimensional Systems",
    publisher ="Springer" ,
    year = "1998", address="New York, NY"
}

@article{Wintner-1933,
    author = "A. Wintner",
    title = "Upon a Statistical Method in the Theory of Diophantine Approximations",
    journal = "Amer. J. Math.",
    year = "1933", pages="309-331", volume="55", number="1", 
}

@article{margaliot_liberzon_lie_algebraic,
title = {Lie-algebraic stability conditions for nonlinear switched systems and differential inclusions},
journal =SCL,
volume = {55},
number = {1},
pages = {8-16},
year = {2006},
 author = {Michael Margaliot and Daniel Liberzon},
 }

@incollection{levinson_select,
  author      ="J. Nohel and H. J. Sussmann",
  title       = "{Commentary on Minimax, Liapunov and Bang-Bang}",
  editor      = "J A. Nohel and D. H. Sattinger",
  booktitle   = "Selected Papers of Norman Levinson",
  publisher   = "Birkhauser",
  address     = "Boston",
  year        = "1998",
  pages       = "463-475",
   volume="2", 
}

@book{real_analysis_stromberg,
    author = "Karl R. Stromberg",
    title = "Introduction to Classical Real Analysis",
    publisher = "Wadsworth Inc.",
    year = "1981", 
    address="Belmont, CA",
}

@article{SHARON_3_nilp,
title = {Third-order nilpotency, nice reachability and asymptotic stability},
journal = JDE,
volume = {233},
number = {1},
pages = {136-150},
year = {2007},
author = {Yoav Sharon and Michael Margaliot},
}

\end{document}